\documentclass[10pt]{article}
	
\usepackage{amsmath}
\usepackage{amsfonts}
\usepackage{amsthm}
\usepackage{mathtools}
\usepackage{enumitem}
\usepackage{hyperref}
\usepackage{cleveref}
\usepackage{microtype}

\newcommand{\R}{\mathbb{R}}
\newcommand{\C}{\mathbb{C}}

\DeclarePairedDelimiterX{\inner}[2]{\langle}{\rangle}{#1, #2}

\DeclareMathOperator{\End}{End}
\DeclareMathOperator{\Rep}{Rep}

\DeclareMathOperator{\SL}{SL}
\DeclareMathOperator{\SO}{SO}
\DeclareMathOperator{\SU}{SU}
\DeclareMathOperator{\PSL}{PSL}
\DeclareMathOperator{\PSp}{PSp}
\DeclareMathOperator{\PSO}{PSO}
\DeclareMathOperator{\G}{G}

\DeclareMathOperator{\id}{id}

\DeclareMathOperator{\im}{im}
\DeclareMathOperator{\Ad}{Ad}

\newcommand{\parder}[2]{\frac{\partial #1}{\partial #2}}

\newcommand{\g}{\mathfrak{g}}
\renewcommand{\k}{\mathfrak{k}}
\newcommand{\p}{\mathfrak{p}}

\newcommand{\delbar}{\overline{\partial}}

\newcommand{\su}{\mathfrak{su}}
\renewcommand{\sl}{\mathfrak{sl}}

\newtheorem{theorem}{Theorem}[section]

\newtheorem{corollary}[theorem]{Corollary}
\newtheorem{lemma}[theorem]{Lemma}
\newtheorem{definition}[theorem]{Definition}

\newtheoremstyle{remarkstyle}
{}
{}
{}
{}
{\bfseries}
{.} 
{7pt}
{}

\theoremstyle{remarkstyle}
\newtheorem{remark}[theorem]{Remark}
\newtheorem*{acknowledgements}{Acknowledgements}
	
\title{Strict plurisubharmonicity of the energy on Teichm\"uller space associated to Hitchin representations}
\author{Ivo Slegers}
	
\begin{document}
\maketitle
	
\begin{abstract}
Let $\Sigma$ be a closed surface of genus at least two and $\rho \colon \pi_1(\Sigma) \to G$ a Hitchin representation into $G=\PSL(n,\R)$, $\PSp(2n,\R)$, $\PSO(n,n+1)$ or $\G_2$. We consider the energy functional $E$ on the Teichm\"uller space of $\Sigma$ which assigns to each point in $\mathcal{T}(\Sigma)$ the energy of the associated $\rho$-equivariant harmonic map. The main result of this paper is that $E$ is strictly plurisubharmonic. As a corollary we obtain an upper bound of $3 \cdot \mathrm{genus}(\Sigma) -3$ on the index of any critical point of the energy functional.
\end{abstract}		

\section{Introduction}\label{sec:introduction_plurisub}
Let $\Sigma$ be a closed surface of genus at least two and let $\rho \colon \pi_1(\Sigma) \to G$ be a Hitchin representation. In this paper we take $G$ to be either $\PSL(n,\R), \PSp(2n,\R)$, $\PSO(n,n+1)$ or the exceptional group $G_2$. Let $K$ be a maximal compact subgroup of $G$. For every complex structure $J$ on $\Sigma$ there exists a (unique) $\rho$-equivariant harmonic map $f_J \colon (\widetilde{\Sigma}, J) \to G/K$. Recall that a map $f \colon \widetilde{\Sigma} \to G/K$ is called $\rho$-equivariant if $f(\gamma x) = \rho(\gamma) f(x)$ for all $\gamma\in \pi_1(\Sigma)$. The energy density of each $f_J$ is $\pi_1(\Sigma)$-invariant. Hence, it descends to $\Sigma$ and can be integrated to obtain the Dirichlet energy of $f_J$. Assigning to a complex structure $J$ the energy of the harmonic map $f_J$ gives us an energy functional on the Teichm\"uller space of $\Sigma$. We will denote this functional by $E \colon \mathcal{T}(\Sigma) \to \R$. The main result of this paper is the following theorem.

\begin{theorem}\label{thm:maintheorem}
Let $G$ be one of the following Lie groups: $\PSL(n,\R)$, $\PSp(2n,\R)$, $\PSO(n,n+1)$ or the exceptional group $\G_2$. If $\rho \colon \pi_1(\Sigma) \to G$ is a Hitchin representation, then the energy functional $E \colon \mathcal{T}(\Sigma) \to \R$ is strictly plurisubharmonic.
\end{theorem}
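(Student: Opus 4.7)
The plan is to compute the $(1,1)$-Hessian of $E$ on $\mathcal{T}(\Sigma)$, exhibit it as a manifestly non-negative integral expression via a Bochner/Weitzenböck identity, and then use the specific structure of Higgs bundles coming from Hitchin representations to rule out vanishing of this expression in any nontrivial direction.

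Setup and second variation. Fix $J_{0}\in \mathcal{T}(\Sigma)$ and the associated $\rho$-equivariant harmonic map $f=f_{J_{0}}\colon(\widetilde{\Sigma},J_{0})\to G/K$. A holomorphic tangent vector to $\mathcal{T}(\Sigma)$ at $J_{0}$ is represented by a harmonic Beltrami differential $\mu$, and extending to a holomorphic disc gives a family $J_{\zeta}$ with associated harmonic maps $f_{\zeta}$. I would differentiate $E(J_{\zeta})=\int_{\Sigma}|df_{\zeta}|^{2}_{J_{\zeta}}\,dA_{J_{\zeta}}$ twice, using the harmonic-map equation to eliminate cross-terms and isolate the $(1,1)$-part $\partial\bar\partial E(\mu,\bar\mu)$. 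The expected outcome, by an integration by parts argument analogous to those of Toledo and Koziarz--Maubon, is a formula of the shape
\[
\partial\bar\partial E(\mu,\bar\mu)=\int_{\Sigma}\bigl(\|D^{1,0}\xi_{\mu}\|^{2}-\inner{R^{G/K}(\partial f,\xi_{\mu})\xi_{\mu}}{\partial f}\bigr)\,dA,
\]
where $\xi_{\mu}$ is a section of $f^{*}T(G/K)$ encoding the infinitesimal deformation of the harmonic map and $D^{1,0}$ is the $(1,0)$-part of the pullback connection. Both terms are non-negative (the curvature term because $G/K$ has non-positive sectional curvature), yielding plurisubharmonicity.

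Strictness via the Higgs bundle of a Hitchin representation. The real content of the theorem is strictness, and this is where the Hitchin hypothesis has to be used; for a general reductive representation the Hessian above can degenerate along directions tangent to the Fuchsian locus of a twisted subrepresentation. I would translate the vanishing condition $\partial\bar\partial E(\mu,\bar\mu)=0$ into Higgs-bundle language via the non-abelian Hodge correspondence: it becomes an algebraic identity relating $\mu$ to the Higgs field $\Phi_{\rho}$ and its adjoint. For a Hitchin representation, $\Phi_{\rho}$ lies in the image of the Hitchin section, so on each fibre it is conjugate to a principal nilpotent plus contributions from higher Hitchin differentials; in particular its centralizer in $\g$ is as small as possible. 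Using the Kostant decomposition associated to the principal $\sl_{2}\subset\g$, one shows that the constraint forces $\mu$ to be orthogonal to the image of a certain injective operator, which on a closed surface of genus at least two implies $\mu=0$.

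Main obstacle. The hard step is the final algebraic/analytic argument that the Hessian cannot degenerate in any direction. Getting the identity for $\partial\bar\partial E$ into a form where the Hitchin structure of $\Phi_{\rho}$ can be invoked is the critical manoeuvre; in particular one must identify $\xi_{\mu}$ with an explicit object on the Higgs bundle side (roughly, a harmonic representative of a class in $H^{1}$ of the deformation complex) so that the principal $\sl_{2}$-weight decomposition can be applied. A uniform treatment across $\PSL(n,\R)$, $\PSp(2n,\R)$, $\PSO(n,n+1)$ and $\G_{2}$ should be possible by working with the principal $\sl_{2}$, since it is exactly the existence of this principal embedding that characterises the ambient Lie groups considered in the theorem, but one may have to verify the final irreducibility step group by group. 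A sanity check is that on the Fuchsian locus the result reduces to the classical strict plurisubharmonicity of Wolf/Tromba, since there $\xi_{\mu}$ and $\mu$ are related in the expected way by the developing map of the hyperbolic structure.
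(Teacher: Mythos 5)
Your overall architecture --- a Toledo-type second variation formula showing $\partial\overline{\partial}E \geq 0$ from non-positive Hermitian curvature, followed by a Higgs-bundle analysis of the degenerate directions --- matches the paper's, and you have correctly located the difficulty in the strictness step. But that step is precisely where your proposal has a genuine gap, in two respects. First, the degeneration condition you would extract from your schematic formula is not the one that makes the Higgs-bundle argument work: an expression of the form $\int(\norm{D^{1,0}\xi_\mu}^2 - R)$ degenerates when $D^{1,0}\xi_\mu = 0$, which by itself gives no algebraic relation between $\mu$ and the Higgs field. What the paper actually uses (Toledo's Theorem 2, tracked through the equality case of his Cauchy--Schwarz-type inequality) is that $\Delta E(0,0)=0$ forces $d''_\nabla W = \pm \mu\, d'f$, where $W$ is the complexified variation of the harmonic map. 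This identity couples $\mu$ directly to $d'f$, which under the non-abelian Hodge dictionary \emph{is} the Higgs field: via the affine isomorphism $\beta\colon f^*T_\C(G/K)\to\End_0(E)$ one gets $d''_\nabla \nu = \pm\mu\phi$ for $\nu=\beta(W)$. Without isolating this precise equality condition, your ``critical manoeuvre'' cannot be carried out.

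Second, your proposed mechanism for ruling out degeneration --- minimality of the centralizer of $\Phi_\rho$ and orthogonality to the image of an injective operator --- is unsubstantiated and is not what is needed. The paper's argument is far more concrete: for the Hitchin section the $(2,1)$ entry of $\phi$ is the nonzero constant $r_1$, so the $(2,1)$ entry of $d''_\nabla\nu=\pm\mu\phi$ reads $\overline{\partial}\nu_{2,1}=\pm r_1\mu$ with $\nu_{2,1}$ a section of $K^{-1}=T_{1,0}\Sigma$; hence $[\mu]=0$ in $H^1(X,T_{1,0}\Sigma)$, i.e.\ the direction is trivial in Teichm\"uller space. (The conclusion is $[\mu]=0$, not $\mu=0$ pointwise, which is exactly what strictness on $\mathcal{T}(\Sigma)$ requires; your harmonic-representative normalisation would then give $\mu=0$.) Your principal-$\sl_2$ instinct points at the right structure --- the constant subdiagonal is the principal nilpotent --- but without the equality condition and this entry-by-entry reading of $\mu\phi$ the argument does not close. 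Finally, no group-by-group irreducibility check is needed for $\PSp(2n,\R)$, $\PSO(n,n+1)$ or $\G_2$: their Hitchin components and symmetric spaces embed (totally geodesically) into those of $\PSL(m,\R)$, so the energy functional is literally unchanged and the whole theorem reduces to the $\PSL(n,\R)$ case.
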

This theorem extends the results of Tromba (\cite[Theorem 6.2.6]{Tromba}) to a wider class of energy functionals. Tromba considers a fixed hyperbolic metric $g$ on $\Sigma$ and studies the energy functional that assigns to each complex structure $J$ the energy of the harmonic map $(\Sigma,J) \to (\Sigma,g)$ that is homotopic to the identity. He proves that this functional is strictly plurisubharmonic. This corresponds to our result if we take $\rho \colon \pi_1(\Sigma) \to \PSL(2,\R)$ to be a Fuchsian representation. Hitchin representations form a larger class of representations (that contains the Fuchsian representations) so \Cref{thm:maintheorem} can be seen as a natural extension of Tromba's results.

The energy functional $E$ is studied by Labourie in \cite{LabourieCrossRatios}. He proved $E$ is a proper function on Teichm\"uller space and hence has a global minimum. Labourie conjectured that this critical point of the energy functional is unique. This conjecture has been proved in the case that the Lie group $G$ has rank 2 (see \cite{Loftin} and \cite{LabourieCyclic}) but remains open in higher rank. Our result puts a limit on how degenerate a critical point of $E$ can be. More precisely, it implies that the Hessian of $E$ at any critical point is positive definite on a subspace of dimension at least $3\cdot \mathrm{genus}(\Sigma)-3$ (cf. \Cref{cor:indexbound}).

Various examples of plurisubharmonic functions on Teichm\"uller space have been constructed. Notably, in \cite{Yeung} it is proved that Teichm\"uller space admits a bounded and strictly plurisubharmonic exhaustion function. In contrast, the energy functionals we consider in this paper provide interesting examples of strictly plurisubharmonic functions that are proper.

Our proof of \Cref{thm:maintheorem} is based on the work of Toledo in \cite{Toledo}. Our main innovation is the use of Higgs bundles techniques to sharpen the results of that paper in the particular case we consider. Toledo considers a Riemannian manifold $N$ of non-positive Hermitian curvature (see \Cref{sec:toledosresults}) and makes the assumption that for every complex structure $J$ there exists a unique harmonic map $(\Sigma,J) \to N$ in a given homotopy class. He then proves that the functional that assigns to each $J$ the energy of this harmonic map is a plurisubharmonic function on Teichm\"uller space. The setting we consider amounts to taking $N = {\rho(\pi_1(\Sigma))\setminus G/K}$. Our proof of \Cref{thm:maintheorem} combines the result of Toledo with the Higgs bundle description of Hitchin representations to obtain the \textit{strict} plurisubharmonicity of $E$.

We obtain two corollaries to \Cref{thm:maintheorem}. The first, \Cref{cor:indexbound}, gives an upper bound on the index of critical points of $E$. Namely, if $g$ is the genus of $\Sigma$, then the index of a critical point of $E$ is at most $\dim_{\C} \mathcal{T}(\Sigma) = 3g-3$. The second corollary, \Cref{ref:totallyreal}, states that the set of points where $E$ attains its minimal value is locally contained in a totally real submanifold of $\mathcal{T}(\Sigma)$.

The proof of \Cref{thm:maintheorem} and its corollaries will be given in \Cref{sec:proof}. In \Cref{sec:nonabelianhodgecor} we recall the aspects of the Non-Abelian Hodge correspondence and the construction of the Hitchin component that we need for our proof. In \Cref{sec:toledosresults} we describe the results of \cite{Toledo} on which our proof will be based.

\begin{acknowledgements}
The author wishes to thank Professor Ursula Hamenst\"adt for her encouragement and the many helpful suggestions she has made during this project. The author was supported by the IMPRS graduate program of the Max Planck Institute for Mathematics.
\end{acknowledgements}

\section{Non-Abelian Hodge correspondence}\label{sec:nonabelianhodgecor}
We briefly recall the Non-Abelian Hodge correspondence and the construction of the Hitchin component for the case $G = \SL(n,\C)$. We follow parts of the expositions found in \cite{Maubon} and \cite{LiAnIntroduction}. In this section we will denote $G = \SL(n,\C)$ and $K = \SU(n)$. The Lie algebras of these groups we denote by $\g = \sl(n,\C)$ and $\k = \su(n)$ and we let $\p \subset \g$ be the subspace of Hermitian matrices. We have $\g = \k \oplus \p$. Furthermore, let $X$ be a Riemann surface of genus at least two, let $\widetilde{X}$ be its universal cover and denote by $K_X$ the canonical bundle of $X$. Finally, if $E\to X$ is a vector bundle we denote by $\End_0(E)$ the vector bundle of trace free endomorphisms of $E$.

\begin{definition}\label{def:higgsbundle}
A $G$-Higgs bundle over $X$ is a pair $(E,\phi)$ where $E$ is a rank $n$ holomorphic vector bundle over $X$ with trivial determinant bundle and $\phi$ is a holomorphic section of $K_X\otimes \End_0(E)$. We call $(E,\phi)$ stable if any proper sub-G-Higgs bundle has negative degree and we call $(E,\phi)$ polystable if it is a direct sum of stable G-Higgs bundles.
\end{definition}
We denote by $\mathcal{M}_{\text{Higgs}}(G)$ the moduli space of gauge equivalence classes of polystable $G$-Higgs bundles over $X$.  The representation variety $\Rep(\pi_1(X), G)$ is the set of conjugacy classes of reductive representations of $\pi_1(X)$ into $G$. The Non-Abelian Hodge correspondence describes an identification between $\Rep(\pi_1(X),G)$ and $\mathcal{M}_{\text{Higgs}}(G)$.

We first describe how to construct a $G$-Higgs bundle from a representation. Let $\rho \colon \pi_1(X) \to G$ be a reductive representation and consider the $G$-bundle $P_G = (\widetilde{X} \times G)/\pi_1(X) \to X$ where $\pi_1(X)$ acts on the second factor via the representation $\rho$. Let $\omega \in \Omega^1(G,\g)$ be the left Cartan form on $G$. Then the form $\pi^*\omega$ on $\widetilde{X}\times G$ is the connection form of the flat connection of $\widetilde{X}\times G$ (where $\pi \colon \widetilde{X}\times G \to G$ is the projection to the second factor). This form descends to $P_G$ inducing a flat connection on $P_G$ which we will denote by $D$. 

Since $\rho$ is reductive it follows from \cite{Corlette} that there exists a $\rho$-equivariant harmonic map $f \colon \widetilde{X} \to G/K$ (unique up to composition with an element in the centraliser of $\im\rho$). We consider the reduction of the structure group of $P_G$ to $K$ determined by $f$. The projection $G\to G/K$ is a $K$-bundle which we pull back via $f$ to obtain the $K$-subbundle $f^*G \subset \widetilde{X}\times G$. By $\rho$-equivariance of $f$ this bundle descends to a $K$-bundle $P_K \subset P_G$ over $X$. We denote by $\omega^\k$ and $\omega^\p$ the composition of the Cartan form on $G$ with the projections $\g \to \k$ and $\g \to \p$ respectively. Note that on $f^*G$ we have $\pi^*\omega = f^*\omega$ hence $\pi^*\omega = f^*\omega^\k + f^*\omega^\p$. The form $f^*\omega^\k$ descends to $P_K$ and is a connection form. We will denote the connection it determines on $P_K$ by $\nabla$. The form $f^*\omega^\p$ descends to a $\p$-valued one-form on $P_K$. This form is basic and hence determines a section of $T^*X \otimes (P_K \times_{\Ad K} \p)$ which we will call $\Phi$. From the above observations follows that $D = \nabla + \Phi$. 

From the data of $(P_K, \nabla)$ and $\Phi$ a $G$-Higgs bundle can be constructed. We consider $E = P_K \times_K \C^n$ where $K = \SU(n)$ acts on $\C^n$ via the canonical action. The connection $\nabla$ on $P_K$ induces a connection on $E$, that we will also denote by $\nabla$. The $(0,1)$ part of $\nabla$ determines a holomorphic structure on $E$. We note that $\p^\C = \sl(n,\C) = \End_0(\C^n)$ hence ${P_K \times_K \p^\C} = \End_0(E)$. It follows that the $(1,0)$ part of $\Phi$, which we will denote by $\phi = \Phi^{1,0}$, is a section of $K_X \otimes (P_K \times_{\Ad K} \p^\C) = K_X \otimes \End_0(E)$. Finally, we use that the harmonicity condition on the map $f$ translates to $\nabla^{0,1}\phi = \nabla^{0,1}\Phi^{1,0} = 0$. So $\phi$ is a holomorphic section of $K_X\otimes \End_0(E)$. We conclude that the pair $(E,\phi)$ is a $G$-Higgs bundle.

Conversely, if $(E,\phi)$ is a polystable $G$-Higgs bundle, then it follows from a theorem of Hitchin \cite{HitchinSelfDuality} and Simpson \cite{Simpson} that there exists a Hermitian metric $H$ on $E$ such that
\begin{equation*}
F^{\nabla^H} + [\phi, \phi^{*^H}] = 0.
\end{equation*}
Here $\nabla^{H}$ denotes the Chern connection of $H$, $F^{\nabla^H}$ is its curvature and $\phi^{*^H}$ is the adjoint of $\phi$ with respect to $H$. The above condition implies that if we define a connection by setting $D = \nabla^H + \phi + \phi^{*^H}$, then $D$ is flat. We now obtain a representation $\rho \colon \pi_1(X) \to \SL(n,\C)$ by taking a holonomy representation of the flat bundle $E$ around any point $x\in X$.

The Non-Abelian Hodge correspondence states that the two constructions described above are inverses of each other and describe a homeomorphism between $\mathcal{M}_{Higgs}(G)$ and $\Rep(\pi_1(X), G)$.

In the following lemmas we collect two observations about the above construction that we will use in later arguments. By $\rho$-equivariance the bundle $f^*T(G/K)$ defined over $\widetilde{X}$ descends to a bundle over $X$. We will denote this bundle also by $f^*T(G/K)$. We denote by $\nabla^{lc}$ the Levi-Civita connection on $T(G/K)$.
\begin{lemma}\label{lem:identificationvectorbundles}
The bundles $(\End_0(E), \nabla)$ and $(f^*T_\C (G/K), f^*\nabla^{lc})$ are affine isomorphic. That is there is a vector bundle isomorphism $\beta \colon f^*T_\C (G/K) \to \End_0(E)$ with $\beta^*\nabla = f^*\nabla^{lc}$.
\end{lemma}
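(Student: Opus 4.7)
The plan is to exhibit $\beta$ as the natural identification of both bundles with the associated bundle $P_K \times_{\Ad K} \p^\C$, and then to verify that under this identification the two connections agree because they are both induced by the $\k$-part of the Cartan form on $G$ pulled back along $f$.

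First I would identify $f^*T_\C(G/K)$ with $P_K \times_{\Ad K} \p^\C$ as follows. The Cartan decomposition $\g = \k \oplus \p$ gives the standard isomorphism $T(G/K) \cong G \times_K \p$ (where $K$ acts by $\Ad$), and complexifying yields $T_\C(G/K) \cong G \times_{\Ad K} \p^\C$. Pulling back by $f$, one obtains $f^*T_\C(G/K) \cong f^*G \times_{\Ad K} \p^\C$ over $\widetilde X$. The $\rho$-equivariance of $f$ makes the $\pi_1(X)$-quotient of $f^*G$ equal to the principal $K$-bundle $P_K$ constructed in the paper, so after descending this becomes $P_K \times_{\Ad K} \p^\C$. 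As noted in the excerpt, this is exactly $\End_0(E)$. I take $\beta$ to be the composition of these canonical identifications.

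Next I would recall that for a Riemannian symmetric space $G/K$, the Levi-Civita connection coincides with the canonical connection associated to the reductive decomposition $\g = \k \oplus \p$ (this is standard, e.g.\ Kobayashi--Nomizu). Concretely, this means that the Levi-Civita connection on $T(G/K) = G \times_K \p$ is the one induced from the principal $K$-connection on the $K$-bundle $G \to G/K$ whose connection $1$-form is exactly $\omega^\k$, the $\k$-component of the left Cartan form on $G$.

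Finally I would compare the connection forms. Pulling back along $f$, the Levi-Civita connection lifts to the principal $K$-connection on $f^*G$ with connection form $f^*\omega^\k$. But this is precisely the connection form that, by the construction recalled in the excerpt, descends to $P_K$ and defines $\nabla$. Hence the induced covariant derivatives on the associated vector bundle with fiber $\p^\C$ agree: on the $f^*G$ side this is $f^*\nabla^{lc}$ extended $\C$-linearly, and on the $P_K$ side this is the connection $\nabla$ on $\End_0(E) = P_K \times_{\Ad K} \p^\C$. Therefore $\beta^*\nabla = f^*\nabla^{lc}$. There is no real obstacle here; the only step that is not purely formal is invoking the equality of the Levi-Civita connection with the canonical reductive connection on the symmetric space $G/K$, which I will simply cite.
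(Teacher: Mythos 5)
Your proposal is correct and follows essentially the same route as the paper: both identify the two bundles with $P_K \times_{\Ad K} \p^\C$ via $T(G/K) = G \times_{\Ad K}\p$ and then observe that $f^*\omega^\k$ simultaneously induces $f^*\nabla^{lc}$ and, by construction, $\nabla$. Your explicit appeal to the standard fact that the Levi-Civita connection of the symmetric space is the canonical connection of the reductive decomposition is the same step the paper asserts without citation.
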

\begin{proof}
We first observe that $T(G/K) = G\times_{\Ad K} \p$ and hence on $\widetilde{X}$ we have
\begin{equation*}
f^*T(G/K) = f^*(G\times_{\Ad K} \p) = (f^*G) \times_{\Ad K} \p.
\end{equation*}
Both these bundles descend to $X$ so on $X$ we have
\begin{equation*}
f^*T (G/K) = P_K\times_{\Ad K} \p.
\end{equation*}
In the above discussion we saw $P_K\times_{\Ad K} \p^\C = \End_0(E)$ so we find that $f^*T_\C (G/K) = \End_0(E)$. Finally, we observe that $\omega^\k$ on $G$ is the connection form that induces the Levi-Civita connection on $G\times_{\Ad K}\p$. So $f^*\omega^\k$ induces the connection $f^*\nabla^{lc}$ on $f^*T_\C (G/K)$ and also, by construction, induces the connection $\nabla$ on $\End_0(E)$. We conclude that the two bundles are indeed affine isomorphic.
\end{proof}
\begin{lemma}\label{lem:identificationderivative}
Consider the derivative of the map $f$ as a section $df \in T^*X \otimes f^*T(G/K)$. Then under the above described correspondence of vector bundles we have the following equality of $P_K\times_{\Ad K} \p$ valued one-forms
\begin{equation*}
\beta(df) = \Phi
\end{equation*}
As a consequence we obtain, if we denote $d'f = (df)^{1,0}$, that
\begin{equation*}
\beta(d'f) = \phi.
\end{equation*}
\end{lemma}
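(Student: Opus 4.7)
The plan is to prove $\beta(df) = \Phi$ by directly unwinding the identifications used in the construction of $\Phi$; the statement $\beta(d'f) = \phi$ will then follow immediately by taking $(1,0)$-parts.

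The key preliminary fact I would establish is that, under the standard identification $T(G/K) = G \times_{\Ad K} \p$, the differential of the canonical projection $p \colon G \to G/K$ sends $v \in T_g G$ to the class $[g, \omega^\p(v)]$. This amounts to checking that $\omega^\p$ is basic on the principal $K$-bundle $G \to G/K$: it vanishes on vertical vectors (which are of the form $g \cdot X$ with $X \in \k$), and it is $\Ad K$-equivariant for the right $K$-action because the decomposition $\g = \k \oplus \p$ is $\Ad K$-invariant. Hence $\omega^\p$ descends to a $T(G/K)$-valued one-form on $G/K$ that is tautologically $dp$.

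Next, I would transport this statement along $f$. Writing $q \colon f^*G \to \widetilde{X}$ for the bundle projection and $\pi \colon f^*G \to G$ for the tautological projection, the relation $p \circ \pi = f \circ q$ gives $dp \circ d\pi = df \circ dq$. Combined with the preliminary fact this shows that $f^*\omega^\p = \pi^*\omega^\p$, viewed as a $\p$-valued basic form on $f^*G$, represents $df$ under the identification $f^*T(G/K) = f^*G \times_{\Ad K} \p$. Quotienting by the action of $\pi_1(X)$ turns $f^*G$ into $P_K$ and the form $f^*\omega^\p$ into the section $\Phi$, yielding $\beta(df) = \Phi$. The consequence $\beta(d'f) = \phi$ then follows at once by complexifying and taking $(1,0)$-parts, since $\beta$ is $\C$-linear on the fiber and $\phi = \Phi^{1,0}$, $d'f = (df)^{1,0}$ by definition.

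The main obstacle is not a genuine mathematical difficulty but careful bookkeeping of pullbacks, associated bundles, and the $\pi_1(X)$- and $K$-equivariance data at each stage; once the preliminary description of $dp$ in terms of $\omega^\p$ is in place, everything else is naturality.
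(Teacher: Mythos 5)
Your proposal is correct and follows essentially the same route as the paper: both arguments reduce to the observation that, for $A \in T_gG$, the differential $dp(A)$ corresponds to $[g,\omega^\p(A)]$ under $T(G/K) = G\times_{\Ad K}\p$, and then descend the basic form $f^*\omega^\p$ on $f^*G$ to $\Phi$ on $P_K$. The paper performs this as a pointwise computation on tangent vectors $(X,A)\in T_{(x,g)}f^*G$ rather than isolating the statement about $dp$ as a preliminary fact, but the content is identical.
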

\begin{proof}
We consider the vector bundle valued one-form $\Psi\in T^*X \otimes (P_K \times_{\Ad K} \p)$ defined by $\Psi = \beta(df)$. We lift $\Psi$ first to $P_K$ and then to $f^*G$ to obtain a $\p$-valued one-form $\widetilde{\Psi}$ on $f^*G$. Let $p \colon G \to G/K$ be the quotient map. By unrolling the definition of $\beta$ we can describe $\widetilde{\Psi}$ as follows. Let $(x,g) \in f^*G$ (i.e. $f(x) = p(g)$) and $(X,A) \in T_{(x,g)}f^*G$. Then $\widetilde{\Psi}((X,A)) = \xi$ where $\xi \in \p$ is the unique element such that $g_* dp(\xi) = df(X)$. We now consider the form $f^*\omega^\p$. Here $f \colon f^*G \to G$ is the map induced by the pull back construction and is given by $f(x,g) = g$. We have $(f^*\omega^\p)((X,A)) = \omega^\p(A)$. The condition $(X,A) \in T_{(x,g)}f^*G$ implies $df(X) = dp(A)$ hence we observe that
\begin{equation*}
g_*dp(f^*\omega^\p((X,A))) = g_* dp(\omega^\p(A)) = dp(A) = df(X).
\end{equation*}
We find that $f^*\omega^\p((X,A)) = \xi$ and hence $f^*\omega^\p = \widetilde{\Psi}$ on $f^*G$. Since $f^*\omega^\p$ descends to $\Phi$ and $\widetilde{\Psi}$ descends to $\Psi = \beta(df)$ we conclude that indeed $\beta(df) = \Phi$.
\end{proof}

\subsection{Hitchin component}
If $(p_2,\hdots, p_n)$ is a basis for the space of conjugation invariant polynomials on $\sl(n,\C)$ we can construct a map
\begin{equation*}
p \colon \mathcal{M}_{\text{Higgs}}(G) \to \oplus_{i=2}^n H^0(X;K_X^i) \colon (E,\phi) \mapsto (p_2(\phi),\hdots,p_n(\phi))
\end{equation*}
via the Chern-Weil construction. This map is called the Hitchin fibration. A section of this map can be constructed as follows. Let $K_X^{1/2}$ be a choice of holomorphic line bundle over $X$ that squares to $K_X$. We set
\begin{equation*}
E = K_X^{\frac{n-1}{2}} \oplus K_X^{\frac{n-3}{2}} \oplus \dots \oplus K_X^{\frac{3-n}{2}} \oplus K_X^{\frac{1-n}{2}}.
\end{equation*}
Then $K_X\otimes \End_0(E) \subset \oplus_{i,j=1}^n K_X^{i-j+1}$. For $(q_2,\hdots,q_n) \in \oplus_{i=2}^n H^0(X; K^i_K)$ we define
\begin{align}\label{eq:hitchinsection}
s(q_2,\hdots,q_n) = 
\left(
E, ~\phi = 
\begin{pmatrix}
0 & q_2 & q_3 & \hdots & q_n\\
r_1 & 0 & q_2 & \hdots & q_{n-1}\\
0 & r_2 & 0 & \ddots & \vdots\\
\vdots & \ddots & \ddots & \ddots & q_2 \\
0 &  \hdots & 0 & r_{n-1} & 0
\end{pmatrix}
\right)
\end{align}
where $r_i = \frac{i(n-i)}{2}$. For a suitable choice of $(p_2,\hdots,p_n)$ we have that $s$ is indeed a section of $p$. Hitchin proved in \cite{HitchinLieGroups} that representations determined (via the Non-Abelian Hodge correspondence) by Higgs bundles in the image of this section take values in $\SL(n,\R)$. Furthermore, these representations constitute precisely a connected component of the space $\Rep(\pi_1(X), \SL(n,\R))$. We call this connected component the \textit{Hitchin component} and representations contained in it \textit{Hitchin representations}. We note that the exact form of the section $\phi$ in \Cref{eq:hitchinsection} depends on a choice of irreducible embedding of $\SL(2,\R)$ into $\SL(n,\R)$. The resulting sections for different choices can be related by a gauge transformation. We follow the choice made in \cite{LiAnIntroduction} and hence $\phi$ differs slightly from the section that appears in \cite{HitchinLieGroups}.

By composing with the projection $\SL(n,\R) \to \PSL(n,\R)$ a Hitchin representation induces a representation into $\PSL(n,\R)$. Hitchin proved (\cite[Section 10]{HitchinLieGroups}) that $\Rep(\pi_1(\Sigma), \PSL(n,\R))$ contains a connected component consisting entirely of representations that are obtained in this way (i.e. each can be lifted to a Hitchin representations into $\SL(n,\R)$). We call representations of $\pi_1(\Sigma)$ into $\PSL(n,\R)$ that lie in this component also \textit{Hitchin representations}.

If $G^r$ is an adjoint group of the split real form of a complex simple Lie group  it is also possible to identify a Hitchin component in $\Rep(\pi_1(\Sigma),G^r)$ using a Higgs bundle argument (\cite{HitchinLieGroups}). However, for convenience we give an alternative definition. Namely, for such $G^r$ there exists an irreducible representation $\iota_{G^r} \colon \PSL(2,\R) \to G^r$ that is unique up to conjugation. Composing a Fuchsian representation $\rho_0 \colon \pi_1(\Sigma) \to \PSL(2,\R)$ that corresponds to a point in Teichm\"uller space with $\iota_{G^r}$ yields a representation into $G^r$. The Hitchin component of $\Rep(\pi_1(\Sigma),G^r)$ can be defined as the connected component containing $\iota_{G^r} \circ \rho_0$.

The cases $G^r = \PSp(2n,\R)$, $\PSO(n,n+1)$ or $\G_2$ have the special feature that if we consider $G^r$ as a subset of $\PSL(m,\R)$ (for $m = 2n, 2n+1$ or $7$ respectively), then $\iota_{G^r} = \iota_{\PSL(m,\R)}$. Hence, the Hitchin component for $G^r$ can be realised as a subset of the Hitchin component for $\PSL(m,\R)$.

\section{Plurisubharmonicity}\label{sec:toledosresults}
In this section we explain some of the results of \cite{Toledo} and introduce some notation used in that paper that we will also use. Let 
\begin{equation*}
\mathcal{C} = \{J \in C^\infty(\Sigma, \End(T\Sigma)) \mid J^2 = -\id\}
\end{equation*}
be the set of almost complex structures on $\Sigma$. We let $N$ be a Riemannian manifold of non-positive Hermitian sectional curvature. This condition means that $R(X,Y, \overline{X}, \overline{Y}) \leq 0$ for all $X,Y \in TN\otimes \C$ where $R$ is the complex multilinear extension of the Riemannian curvature tensor of $N$. 

If $A$ is an endomorphism of $T\Sigma$, then for any one-form $\alpha \in \Omega^1(\Sigma)$ we denote $A\alpha = -\alpha\circ A$. In particular, if $J\in \mathcal{C}$, then $Jdf = -df \circ J = df\circ J^{-1}$. For any $J\in \mathcal{C}$ the Dirichlet energy of a map $f \colon \Sigma \to N$ is given by
\begin{equation*}
\mathcal{E}(J, f) = \frac{1}{2} \int_{\Sigma} \left\langle df \wedge J df \right\rangle.
\end{equation*}
A map is harmonic if it is a critical point of this functional. We fix a homotopy class of maps $\Sigma \to N$. We make the assumption that for each $J \in \mathcal{C}$ there exists a unique harmonic map $f_J \colon (\Sigma,J) \to N$ in this homotopy class. We assume further that the maps $f_J$ depend smoothly on $J$. These assumptions will be satisfied in the situation we will consider. Define $E \colon \mathcal{C} \to \R$ by $E(J) = \mathcal{E}(J, f_J)$. This map descends to Teichm\"uller space because if $\phi \in \mathrm{Diff}_0(\Sigma)$, then $f_{\phi^*J} = \phi^*f_J$ hence $E(\phi^*J) = \mathcal{E}(\phi^*J, \phi^*f_J) = \mathcal{E}(J,f_J) = E(J)$. The main result of \cite{Toledo} is that $E$ is a plurisubharmonic function on Teichm\"uller space. 

To state this result formally we consider a small disk $D \subset \C$ centred around $0$ and a holomorphic family of complex structures $J \colon D \to \mathcal{C}$. Denote by $u = {s+ it}$ the complex coordinates on $D$. Set $E(s,t) = E(J(s,t))$ and $f(s,t) = f_{J(s,t)}$. We define
\begin{equation*}
W = \parder{f}{s} + i \parder{f}{t} \in \Gamma^\infty(f^*T_\C N).
\end{equation*}
We equip $\Sigma$ with the complex structure $J_0 = J(0,0)$ and denote by $T_\C \Sigma = T_{1,0}\Sigma\oplus T_{0,1}\Sigma$ and $T^*_\C \Sigma = T^{1,0}\Sigma \oplus T^{0,1}\Sigma$ the induced splittings of the tangent and cotangent space into $+i$ and $-i$ eigenspaces of $J_0$. The complexification of the derivative $df$ splits into a $(1,0)$ and $(0,1)$ part denoted by $d'f$ and $d''f$ respectively. Similarly, if $s$ is a section of a vector bundle equipped with a connection $\nabla$, then we denote by $d_\nabla's$ and $d_\nabla''s$ respectively the $(1,0)$ and $(0,1)$ part of $\nabla s$. Finally, we consider
\begin{equation*}
H = \parder{J}{s}(0,0) \in T_{J_0} \mathcal{C}.
\end{equation*}
The endomorphism $H$ of $TM$ anti-commutes with $J_0$ hence its complexification can be written as $H = \mu + \overline{\mu}$ with $\mu$ a smooth section of $T^{0,1}\Sigma\otimes T_{1,0}\Sigma$.

Theorem 2 of \cite{Toledo} now states:
\begin{theorem}\label{thm:toledoplurisubharmonicity}
We have
\begin{equation*}
\Delta E(0,0) \geq 0
\end{equation*}
and in case of equality we have 
\begin{align}\label{eq:equality}
d_\nabla''W = \pm \mu d'f.
\end{align}
\end{theorem}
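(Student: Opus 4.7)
The plan is to compute $\Delta E(0,0)=\partial_s^2 E(0,0)+\partial_t^2 E(0,0)$ directly, by differentiating $\mathcal{E}(J(u),f(u))$ twice and reorganising the outcome as a sum of manifestly non-negative pointwise quantities minus a curvature term whose sign is controlled by the hypothesis on $N$.

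For the first variation, applying the chain rule to $\mathcal{E}(J(u),f(u))$ produces two contributions: one from the variation of $J$ at fixed map, and one from the variation of $f$ at fixed complex structure. The latter vanishes identically in $u$, because $f(u)$ is by assumption the harmonic representative for $J(u)$ and therefore solves the Euler-Lagrange equation $d_f\mathcal{E}(J(u),f(u))=0$. Hence $\partial_s E$ and $\partial_t E$ are given by purely $J$-variational expressions involving $\parder{J}{s}$, $\parder{J}{t}$ and $df$.

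Differentiating once more, the map variation re-enters in an essential way: even though $d_f\mathcal{E}$ vanishes at the harmonic point, its linearisation does not. After collecting the $\partial_s^2$ and $\partial_t^2$ pieces, I would recast everything in terms of the $(1,0)$/$(0,1)$ splitting of $T_\C\Sigma$ induced by $J_0$, using $W=\parder{f}{s}+i\parder{f}{t}$ and the decomposition $H=\mu+\overline{\mu}$ which encodes the anti-commutation $HJ_0+J_0H=0$. The harmonic map equation on a Riemann surface reads $d_\nabla''d'f=0$; using this repeatedly in conjunction with integration by parts on $\Sigma$ collapses the many cross terms and dresses the map-variation contribution with the curvature tensor of $N$. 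The target formula has the shape
\begin{equation*}
\Delta E(0,0) \;=\; c\int_\Sigma \bigl| d_\nabla''W \mp \mu\, d'f \bigr|^2 \;-\; c\int_\Sigma R\bigl(d'f,\mu\, d'f,\overline{d'f},\overline{\mu\, d'f}\bigr)
\end{equation*}
for a positive normalisation constant $c$. The curvature integrand is non-positive by the Hermitian sectional curvature hypothesis on $N$, so $\Delta E(0,0)\geq 0$, and equality forces both the curvature term and the first integrand to vanish pointwise, the latter giving exactly $d_\nabla''W=\pm\mu\, d'f$ as claimed.

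The main obstacle is the bookkeeping in the second variation. One must simultaneously differentiate a tensor-valued expression in which the complex structure $J(u)$, the pulled-back Levi-Civita connection $f(u)^*\nabla^{lc}$, and the differential $df(u)$ all move, and the non-commutation of $H$ with $J_0$ generates cross terms mixing types $(1,0)$ and $(0,1)$. Assembling these pieces into a single completed square $|d_\nabla''W\mp\mu\,d'f|^2$, with precisely the coefficient and sign needed so that the remainder is a curvature quantity tamed by the Hermitian sectional curvature assumption, is the delicate part of the argument.
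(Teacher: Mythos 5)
Your overall strategy --- differentiate twice, use harmonicity to kill the first-order map variation, and reorganise the second variation into a completed square plus a curvature term controlled by the Hermitian curvature hypothesis --- has the right general shape, and it is essentially the shape of Toledo's argument, which the paper does not reprove but only cites (the paper's own contribution here is merely to extract the equality case from Toledo's chain of inequalities). However, the target formula you propose is wrong, and the error is structural rather than a matter of constants. The curvature term you write, $R(d'f,\mu\, d'f,\overline{d'f},\overline{\mu\, d'f})$, vanishes identically: the vector parts of $d'f$ and $\mu\, d'f$ are both $\parder{f}{z}$ (contracting with $\mu$ only changes the form part), and $R$ is antisymmetric in its first two arguments. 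Your identity would therefore collapse to $\Delta E(0,0)=c\int_\Sigma\abs{d_\nabla''W\mp\mu\, d'f}^2\geq 0$, making the plurisubharmonicity hold with no curvature hypothesis on $N$ at all --- which cannot be correct, since non-positive Hermitian sectional curvature is genuinely needed.

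The missing ingredient is the Jacobi equation satisfied by $W$, i.e.\ the linearisation of the harmonic map equation along the family $u\mapsto(J(u),f(u))$. In Toledo's argument, as summarised in \Cref{sec:toledosresults}, one has $\Delta E(0,0)=-a+b$ with $a$ a cross term pairing $d_\nabla\parder{f}{s}$ and $d_\nabla\parder{f}{t}$ against $Hdf$, and $b$ depending only on $H$ and $df$. The completed-square step gives $a\leq\alpha+\tfrac{b}{2}$ with $\alpha=\int_\Sigma\langle d'_\nabla\overline{W}\wedge J_0 d''_\nabla W\rangle$, and equality there is exactly $d''_\nabla W=\pm\mu\, d'f$; but to close the argument one must convert $\alpha$ back into $a$, which is done by differentiating the harmonic map equation in $u$, integrating by parts, and commuting $d'_\nabla$ past $d''_\nabla$. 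That commutation is what produces the curvature term, and it comes out as $\rho=\int_\Sigma R\bigl(\parder{f}{z},W,\overline{\parder{f}{z}},\overline{W}\bigr)\,dx\wedge dy$: it involves the variation field $W$ itself, which is not determined by $\mu$ and $df$. Your proposal never invokes the linearised harmonicity of $f(u)$, so there is no mechanism relating the norm of $d''_\nabla W$ to the remaining terms, and the bookkeeping you describe cannot assemble into the single identity you state. To repair the proof you need both steps: the Cauchy--Schwarz inequality whose equality case yields \Cref{eq:equality}, and the Weitzenb\"ock-type identity $\alpha=\tfrac{a}{2}+\rho$ coming from the Jacobi equation.
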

The last statement in this theorem is not explicitly stated in \cite{Toledo} but follows from the arguments used to prove the first statement. We briefly clarify how \Cref{eq:equality} is obtained when $\Delta E(0,0) = 0$ (see also the proof of \cite[Theorem 3]{Toledo}). In this section any reference to a numbered equation will to refer to an equation in \cite{Toledo}.

Toledo first calculates (Equation 16) that $\Delta E(0,0) = -a + b$ where
\begin{equation*}
a= -\int_{\Sigma} \left\langle d_\nabla \parder{f}{s} \wedge H df \right \rangle + \left\langle d_\nabla \parder{f}{t} \wedge J_0 H df \right \rangle \text{ and } b = \int_\Sigma \left\langle df \wedge J_0 H^2 df \right\rangle.
\end{equation*}
We denote also
\begin{equation*}
\alpha = \int_\Sigma \left\langle d'_\nabla \overline{W} \wedge J_0 d''_\nabla W\right \rangle \text{ and } \rho = \int_\Sigma R\left(\parder{f}{z}, W, \overline{\parder{f}{z}}, \overline{W} \right) dx \wedge dy.
\end{equation*}
Inequality (26) yields that $a \leq \alpha + \frac{b}{2}$ and Equation (29) gives that $\alpha = \frac{a}{2} + \rho$. Putting these together gives $\alpha \leq \frac{1}{2}(a+b) + 2\rho$ or equivalently $a \leq b + 4\rho$ (which is Inequality (30)). The non-positive Hermitian curvature condition implies $\rho\leq 0$ hence $a \leq b$ from which follows that $\Delta E(0,0) \geq 0$.

If the family $J$ is such that $\Delta E(0,0) = 0$, then we see that equality holds in inequalities (26) and (30). The remarks made by Toledo after Inequality (26) tell us that Inequality (26) is an equality if and only if $d''_\nabla W = \pm \mu d'f$.

We note that in this case we also have $\rho = 0$ which means $R\left(\parder{f}{z}, W, \overline{\parder{f}{z}}, \overline{W} \right) = 0$ everywhere. However, we do not use this in our proof.

\begin{remark}\label{rmk:noncompactness}
We note that in the statements of Theorems 1 and 2 in \cite{Toledo} the manifold $N$ is assumed to be compact. This is something that will not be true in the application we have in mind. The compactness assumption is used to guarantee the existence of a harmonic map $(\Sigma,J) \to N$ in a given homotopy class for every $J$. This is not necessarily true when $N$ is not compact. However, in the situation we consider the existence of such harmonic maps follows from the results of Corlette (\cite{Corlette}). An inspection of the proof in \cite{Toledo} shows that the compactness of $N$ plays no further role. This means we are free to apply \Cref{thm:toledoplurisubharmonicity} even if $N$ is not compact, as long as the existence of a (unique) harmonic map for each $J \in \mathcal{C}$ is guaranteed.
\end{remark}

\section{Proof}\label{sec:proof}
We turn now to the proof of \Cref{thm:maintheorem}. We observe first that it is enough to give a proof for $G = \PSL(n,\R)$. Namely, if $G$ equals $\PSp(2n,\R)$, $\PSO(n,n+1)$ or $\G_2$, then the inclusion $G \subset \PSL(m,\R)$ (for $m = 2n, 2n+1$ or $7$ respectively) induces an inclusion of the Hitchin component for $G$ into the Hitchin component for $\PSL(m,\R)$. Moreover, via the totally geodesic embedding $G/K \subset \PSL(m,\R)/\PSO(m)$ a harmonic map $\widetilde{X} \to G/K$ equivariant for a representation $\rho \colon \pi_1(X) \to G$ can be seen as a harmonic map into $\PSL(m,\R)/\PSO(n)$ equivariant for $\rho$ as a representation into $\PSL(m,\R)$. In particular, the energy functional $E$ is unchanged if we view $\rho$ as a representation into $\PSL(m,\R)$ rather then into $G$.

We consider now the energy function associated to a $\PSL(n,\R)$-Hitchin representation. We lift this representation to a representation into $\SL(n,\R)$ which we denote by $\rho \colon \pi_1(\Sigma) \to \SL(n,\R)$. From now on we denote $G = \SL(n,\R)$ and $K = \SO(n)$. Hitchin representations act freely and properly on $G/K$ (\cite{LabourieAnosovFlows}) so we can consider the locally symmetric space $N = \rho(\pi_1(\Sigma)) \setminus G / K$. The representation $\rho$ determines a homotopy class of maps $\Sigma \to N$ that lift to $\rho$-equivariant maps $\widetilde{\Sigma} \to G/K$. Equivariant harmonic maps for Hitchin representations are unique and depend smoothly on $J$ (see \cite{EellsLemaire} or \cite{Slegers}). Hence, we can consider the energy functional $E \colon \mathcal{T}(\Sigma) \to \R$ as defined in \Cref{sec:toledosresults}. We note that this coincides with the energy functional as described in \Cref{sec:introduction_plurisub}. In \cite{SampsonKahlerGeometry} Sampson proved that locally symmetric spaces of non-compact type have non-positive Hermitian sectional curvature. So \Cref{thm:toledoplurisubharmonicity} applies to $E$.

We now give a proof of \Cref{thm:maintheorem}. The strategy is similar to the proof of \cite[Theorem 3]{Toledo} in which strict plurisubharmonicity is proved when the target is assumed to have strictly negative Hermitian sectional curvature. It is interesting that this strictly negative curvature condition can be replaced by the explicit information about the form of the harmonic map that is provided by the Higgs bundle picture.

\begin{proof}[Proof of \Cref{thm:maintheorem}]
We use the notation introduced in \Cref{sec:toledosresults}. Suppose that $J \colon D \to \mathcal{C}$ is a holomorphic family of complex structures such that $\Delta E(0,0) = 0$. It then follows from \Cref{thm:toledoplurisubharmonicity} that \Cref{eq:equality} holds.

We note that $W$ is a smooth section of $f^*T_\C N$. Using \Cref{lem:identificationvectorbundles} we can view it as a section of $\End_0(E)$ by considering $\nu = \beta(W)$. Since $\beta$ is an affine isomorphism we have $d''_\nabla \nu = \beta(d''_\nabla W)$. Taking into account \Cref{lem:identificationderivative} we see that \Cref{eq:equality} is equivalent to
\begin{gather}\label{eq:equality2}
d''_\nabla \nu = \pm \mu \phi.
\end{gather}
We write $\nu = (\nu_{i,j})_{i,j}$ with each $\nu_{i.j}$ a smooth section of $K^{j-i}$. Keeping in mind the expression for $\phi$ as given in \Cref{eq:hitchinsection} we consider now the (2,1) component of the matrices on both sides of \Cref{eq:equality2}. This gives
\begin{equation*}
\delbar \nu_{2,1} = \pm \mu (r_1 \cdot 1) = \pm \frac{1}{2} \mu.
\end{equation*}
Here $\nu_{2,1}$ is a section of $K^{-1} = T_{1,0} \Sigma$. The above equality implies that $[\mu] = 0$ in $H^1(X, T_{1,0}\Sigma)$ which means precisely that the tangent vector $H \in T_{J_0} \mathcal{C}$ projects to zero in $T_{[J_0]} \mathcal{T}(\Sigma)$.

We conclude that for any family $J$ of complex structures inducing a non-zero tangent vector in Teichm\"uller space we have $\Delta E(0,0)>0$. This concludes the proof.
\end{proof}
As a first corollary of \Cref{thm:maintheorem} we obtain a bound on the index of the critical points of $E$. We recall that if $g = \mathrm{genus}(\Sigma)$, then $\dim_{\R} \mathcal{T}(\Sigma) = 6g - 6$.

\begin{corollary}\label{cor:indexbound}
Under the assumptions of \Cref{thm:maintheorem} the index of a critical point of $E$ is at most $\dim_{\C} \mathcal{T} = 3g -3$.
\end{corollary}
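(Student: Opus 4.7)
The plan is as follows. Strict plurisubharmonicity of $E$ is equivalent to positive-definiteness of the Levi form $\partial\overline{\partial}E$ on $T^{1,0}\mathcal{T}(\Sigma)$. At a critical point $J_0 \in \mathcal{T}(\Sigma)$ the real Hessian $\mathrm{Hess}\,E$ is well-defined as a symmetric bilinear form on $T_{J_0}\mathcal{T}(\Sigma)$, and the standard relation between it and the Levi form yields the identity
\begin{equation*}
\mathrm{Hess}\,E(X,X) + \mathrm{Hess}\,E(JX,JX) = 4\,\partial\overline{\partial}E(v, \overline{v}),
\end{equation*}
where $v = \tfrac{1}{2}(X - iJX) \in T^{1,0}_{J_0}\mathcal{T}(\Sigma)$ and $J$ denotes the complex structure on $\mathcal{T}(\Sigma)$. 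By \Cref{thm:maintheorem} this quantity is strictly positive for every nonzero real tangent vector $X \in T_{J_0}\mathcal{T}(\Sigma)$. This is the only input from the main theorem that I plan to use.

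Given this, the bound on the index reduces to a short linear-algebra argument. Let $V \subset T_{J_0}\mathcal{T}(\Sigma)$ be any real subspace on which $\mathrm{Hess}\,E$ is negative definite. I would first argue that $V \cap JV = 0$: if $X \in V \cap JV$ were nonzero, then $X \in V$ gives $\mathrm{Hess}\,E(X,X) < 0$, while writing $X = JY$ with $Y \in V$ gives $JX = -Y \in V$ and hence $\mathrm{Hess}\,E(JX, JX) < 0$, contradicting the positivity displayed above. It follows that $V \oplus JV$ embeds as a real subspace of $T_{J_0}\mathcal{T}(\Sigma)$, so $2\dim_{\R} V \leq \dim_{\R}\mathcal{T}(\Sigma) = 6g-6$ and therefore $\dim_{\R} V \leq 3g-3$. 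Taking the supremum over such $V$ yields the bound on the index.

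There is no genuine obstacle at this stage: all the analytic content has been absorbed into \Cref{thm:maintheorem}, and the corollary reduces to the standard observation that any subspace on which a function with positive-definite Levi form has negative-definite Hessian at a critical point must have real dimension at most the complex dimension of the ambient manifold.
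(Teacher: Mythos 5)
Your proposal is correct, and it reaches the conclusion by a slightly different linear-algebra route than the paper. The paper complexifies: it takes the sesquilinear extension $\widetilde{H}$ of the Hessian to the complexified tangent space, observes that $\widetilde{H}$ restricted to the span of the $\parder{}{z^\alpha}$ is exactly the Levi form $\parder{^2E}{z^\alpha\partial\overline{z}^\beta}$, hence positive definite on a subspace of dimension $3g-3$, and uses that $H$ and $\widetilde{H}$ have the same index. You instead stay entirely on the real tangent space: you use the critical-point identity $\mathrm{Hess}\,E(X,X)+\mathrm{Hess}\,E(JX,JX)=4\,\partial\overline{\partial}E(v,\overline{v})>0$ (which is correct, and which is the real-variable form of the same fact the paper reads off in coordinates) and then show that any negative-definite subspace $V$ satisfies $V\cap JV=0$, forcing $2\dim_{\R}V\le 6g-6$. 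Both arguments are sound; yours is marginally more self-contained in that it avoids the (true but unproved in the paper) assertion that the index of the real symmetric form equals the index of its sesquilinear extension, at the cost of having to justify the displayed identity relating the Hessian and the Levi form at a critical point. No gaps.
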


\begin{proof}
Assume $[J]\in \mathcal{T}(\Sigma)$ is a critical point of $E$. Let $H$ be the Hessian of $E$ at this point and denote by $\widetilde{H}$ its sesquilinear extension of the complexified tangent space of $\mathcal{T}(\Sigma)$. The forms $H$ and $\widetilde{H}$ have the same index. If $(z^1,\hdots,z^{3g-3})$ are complex coordinates around $[J]$, then the strict plurisubharmonicity property of $E$ implies that
\begin{equation*}
\widetilde{H}(u, v) = \parder{^2 E}{z^\alpha \partial \overline{z}^\beta} u^\alpha \overline{v^\beta}
\end{equation*}
is positive definite. This means that $\widetilde{H}$ is positive definite on the subspace of dimension $3g-3$ that is spanned by the vectors $\parder{}{z^\alpha}$ and as a result has index at most $3g-3$.
\end{proof}

Finally, we obtain the following corollary by applying the results of \cite{HarveyWells} to the function $f = E - \min_{[J]\in\mathcal{T}(\Sigma)} E([J])$. We call a submanifold $P$ of $\mathcal{T}(\Sigma)$ \textit{totally real} if $T_p P$ contains no non-zero complex subspaces of $T_p\mathcal{T}(\Sigma)$ for all $p \in P$.

\begin{corollary}\label{ref:totallyreal}
The set
\begin{equation*}
M=\{[J]\in \mathcal{T}(\Sigma) \mid E \text{ attains its global minimum at } [J]\}.
\end{equation*}
is locally contained in totally real submanifolds of $\mathcal{T}(\Sigma)$. More precisely for every $[J] \in M$ there exists an open neighbourhood $U\subset \mathcal{T}(\Sigma)$ of $[J]$ and a totally real submanifold $P \subset U$ such that $M \cap U \subset P$. In particular, at smooth points of $M$ its tangent space is totally real. It follows that the Hausdorff dimension of $M$ is at most $3g-3$.
\end{corollary}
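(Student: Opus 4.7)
The plan is to invoke the theorem of Harvey and Wells \cite{HarveyWells} on zero sets of non-negative strictly plurisubharmonic functions, applied to $f = E - c$ where $c = \min_{[J] \in \mathcal{T}(\Sigma)} E([J])$. This minimum exists because Labourie showed $E$ is proper (\cite{LabourieCrossRatios}), so $M = f^{-1}(0)$ is nonempty. First I would observe that $f$ is smooth and non-negative with $M$ as its zero set, and that adding a constant does not alter the complex Hessian; so by \Cref{thm:maintheorem} the function $f$ is strictly plurisubharmonic on all of $\mathcal{T}(\Sigma)$, and in particular at every point of $f^{-1}(0) = M$.

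Next I would apply the Harvey--Wells theorem, which states that if $u$ is a $C^2$ non-negative plurisubharmonic function on a complex manifold that is strictly plurisubharmonic at every point of $u^{-1}(0)$, then $u^{-1}(0)$ is locally contained in a totally real $C^1$-submanifold. Applied to $f$, this immediately produces, for each $[J] \in M$, an open neighbourhood $U \subset \mathcal{T}(\Sigma)$ of $[J]$ and a totally real submanifold $P \subset U$ with $M \cap U \subset P$. At a smooth point $[J_0] \in M$, the inclusion $M \cap U \subset P$ combined with smoothness of $M$ near $[J_0]$ forces $T_{[J_0]} M \subset T_{[J_0]} P$, and any subspace of a totally real subspace is itself totally real, giving the tangent-space assertion.

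The Hausdorff dimension bound then follows from the standard observation that a totally real submanifold of a complex manifold of complex dimension $n$ has real dimension at most $n$: if $T_p P$ had real dimension exceeding $n$, then $T_p P \cap J(T_p P)$ would be a non-trivial complex subspace. Since $\dim_\C \mathcal{T}(\Sigma) = 3g-3$, each local $P$ has real dimension at most $3g-3$; using second countability of $\mathcal{T}(\Sigma)$ to cover $M$ by countably many such pieces then bounds the Hausdorff dimension of $M$ by $3g-3$. There is no substantial obstacle in this argument: the entire content of the proof is recognising that the hypotheses of the Harvey--Wells theorem are exactly what \Cref{thm:maintheorem} provides.
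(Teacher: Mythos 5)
Your proposal is correct and follows exactly the route the paper takes: the paper's entire argument is the remark preceding the corollary, namely to apply the Harvey--Wells theorem to $f = E - \min_{[J]\in\mathcal{T}(\Sigma)} E([J])$, whose hypotheses are supplied by the properness of $E$ and \Cref{thm:maintheorem}. Your additional details on the tangent-space assertion and the Hausdorff dimension bound are accurate fillings-in of steps the paper leaves implicit.
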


\bibliographystyle{alpha}
\bibliography{bibliography}

\end{document}